\documentclass[a4paper,11pt]{article}
\usepackage[UKenglish]{babel}
\usepackage{authblk}

\usepackage[colorlinks=true]{hyperref}
\hypersetup{allcolors=blue!40!black}
\usepackage{color}
\usepackage{amsfonts,amsthm,amssymb}
\usepackage{mathtools}
\usepackage{dsfont}
\usepackage{float}
\usepackage{epsfig}
\usepackage{graphicx}
\usepackage{caption}
\usepackage{subcaption}
\usepackage{kotex}
\usepackage{tikz}

\newcommand{\SKT}{\mathsf{SKT}}
\newcommand{\DDS}{\mathsf{DDS}}
\usepackage{pgfplots}
\pgfplotsset{
    tick align=outside,
    x grid style={white},
    xmajorgrids,
    y grid style={white},
    ymajorgrids,
    axis line style={white},
    axis background/.style={fill=white!92!black},
    legend style={draw=white, fill=white},
    legend cell align={left}
}

\newtheorem{remark}{Remark}

\newtheorem{thm}{Theorem}[section]

\newtheorem{prop}[thm]{Proposition}

\numberwithin{equation}{section}
\numberwithin{thm}{section}
\numberwithin{problem}{section}
\usepackage[tight,english]{minitoc}
\newcommand{\beq}{\begin{equation}}
\newcommand{\eeq}{\end{equation}}
\newcommand {\f}{\dfrac}
\newcommand {\pa}{\partial}
\newcommand {\e}{\varepsilon}

\newcommand{\R}{\mathbb{R}}

\newcommand\N{{\mathbb N}}

\newcommand\MScN[1]{\href{http://www.ams.org/mathscinet-getitem?mr=#1}{\nolinkurl{(#1)}}}
\newcommand\DOI[1]{\href{http://dx.doi.org/#1}{(doi: \nolinkurl{#1})}}
\newcommand\LINK[1]{\href{#1}{(link: \nolinkurl{#1})}}
\newenvironment{acknowledgment}{\noindent{\bf Acknowledgments.}}{}

\title{Cross-diffusion and fast-reaction in pattern formation: a structural analysis
}
\author{E. Brocchieri\footnote{Department of Mathematics and Scientific Computing, University of Graz. Heinrichstr.~36, 8010 Graz, Austria. E-mail: \href{mail to: elisabetta.brocchieri@uni-graz.at}{elisabetta.brocchieri@uni-graz.at}}, 
C. Soresina\footnote{Department of Mathematics, University of Trento. Via Sommarive~14, 38123 Povo~(TN), Italy. E-mail: \href{mail to: cinzia.soresina@unitn.it}{cinzia.soresina@unitn.it}}}
\date{}
\providecommand{\keywords}[1]{\small\textit{{Keywords.}} #1\\}
\providecommand{\subjclass}[1]{\small\textit{{2010 Mathematics Subject Classification.}} #1}
\begin{document}
\maketitle

\setcounter{secttocdepth}{3}
\begin{abstract}
\noindent
Cross-diffusion systems play a central role in mathematical modelling, in which density-dependent dispersal and multiscale mechanisms can lead to spatial segregation and diffusion-driven instabilities. In several relevant examples, including generalised SKT-type competition models, cross-diffusion terms can be rigorously derived as fast-reaction limits, thereby providing a clear biological interpretation while posing significant analytical challenges.
In this work, we investigate the impact of biologically derived cross-diffusion on Turing instability. For a generalised SKT framework, we characterise instability conditions for a broad class of cross-diffusion functions arising from fast-reaction mechanisms. We then propose an alternative fast-reaction formulation leading to a different diffusion structure and show that, in this case, diffusion-driven pattern formation is prevented. We further discuss an example motivated by dietary diversity and starvation dynamics, and analyse how the sign structure of the reaction Jacobian interacts with cross-diffusion in determining the onset of patterns.
Our results clarify structural features that promote or inhibit spatial self-organisation in competitive systems.
\end{abstract}

\noindent
\keywords{Cross-diffusion, fast-reaction, Turing instability, pattern formation}
\subjclass{Primary: 35Q92, 92D25; Secondary: 35K59, 65P30.}
\section{Introduction}
Reaction–diffusion systems with cross-diffusion have attracted increasing attention in recent years, both for their analytical challenges and for their modelling relevance in biomathematics and pattern formation theory. Beyond the classical framework of diagonal diffusion, cross-diffusion mechanisms introduce nonlinear coupling at the level of spatial fluxes, leading to qualitatively new dynamical behaviours, including spatial segregation, coexistence mechanisms, and cross-diffusion-driven instabilities.

A benchmark example in this context is the Shigesada–Kawasaki–Teramoto (SKT) system for competing populations~\cite{SKT}. In this model, a cross-diffusion term in one species accounts for density-dependent dispersal driven by interspecific interactions (usually referred to as the triangular SKT model). From a modelling perspective, these terms allow for the coexistence of competing species through partial spatial segregation. From the analytical viewpoint, the SKT system has stimulated a vast literature addressing both rigorous well-posedness results and qualitative properties of solutions. Moreover, it has been extensively investigated in connection with pattern formation phenomena; we refer, for instance, to the general framework developed in~\cite{Breden2019, Gambino2012, Gambino2013, IMN}, as well as to the analytical contributions.

The strongly coupled structure of cross-diffusion systems makes the questions of existence and regularity of solutions particularly challenging, and their analysis requires advanced mathematical tools \cite{BoutonDesvillettesDietert2025, BrocchieriDesvillettes,  Desvillettes2015, DesvillettesRDM}.


An additional layer of interest comes from the derivation of cross-diffusion systems via fast-reaction limits~\cite{IMN}. In this approach, one starts from a larger system in which one species exhibits dichotomous states evolving on different time scales. By exploiting time-scale separation and state dichotomy, one rigorously derives a cross-diffusion term (triangular structure) in the singular limit, using entropy methods and compactness arguments.  The fast-reaction limit approach highlights the entropy structure of the fast-reaction system and rigorously yields the emergence of cross-diffusion terms. On the other hand, this method provides a structural and biological justification of the cross-terms, which encode processes occurring at faster temporal scales. This makes the fast-reaction limit approach a promising multi-scale derivation technique that bridges theory, modelling, and applications. Such derivations have been successfully employed in several contexts, including predator–prey dynamics \cite{ConfortoDesSoresina2018, DesvillettesFiorentinoMautone, DesvillettesSoresina, GambinoIGP, IIR}, autotoxicity in plant dynamics \cite{Morgan2026} and starvation mechanisms \cite{BCDK, Brocchieri2025}, models of population dynamics in historical settings (e.g., Neolithic transitions) \cite{Elias2018}, aggregation of cockroaches \cite{cockroaches}, corrosion processes on metal surfaces \cite{Corrosione}, and Michaelis–Menten type kinetics \cite{BaoNgocMM}. From a mathematical standpoint, proving convergence from the fast-reaction system to the effective cross-diffusion model is a delicate problem; from a modelling viewpoint, this approach yields the ``right'' cross-diffusion structure with a clear mechanistic interpretation.

A natural and still largely open question concerns the role of cross-diffusion in diffusion-driven instability. In some settings, such as the SKT model, autotoxicity models, or corrosion systems \cite{Corrosione, GianninoIuorioSoresina}, cross-diffusion may promote the emergence of stable spatial patterns. In other cases, for instance in certain predator–prey systems, it may instead inhibit pattern formation \cite{DesvillettesSoresina}. This raises a more general issue: which classes of biologically derived cross-diffusion terms are capable of inducing Turing instability, possibly even in systems that do not exhibit a classical activator–inhibitor structure? Importantly, this question should be addressed without losing the connection with cross-diffusion structures arising from fast-reaction limits. Recent contributions, such as~\cite{villar2025designing}, analyse pattern formation for reaction-diffusion systems with non-diagonal diffusion matrices in a rather general setting. However, in that framework, the diffusion structure is not necessarily tied to a specific biological derivation via time-scale separation. In the present work, we retain this link and focus on cross-diffusion terms arising from fast-reaction mechanisms.

More precisely, we start from a generalised SKT-type competition model and derive Turing instability conditions for a general cross-diffusion function within the class induced by fast-reaction limits. We then propose an alternative fast-reaction formulation that leads to a different cross-diffusion structure and show that, unlike the previous case, this variant does not support diffusion-driven pattern formation. 
Still within the competitive framework at the reaction level, we also discuss an example inspired by dietary diversity and starvation mechanisms, illustrating how the underlying biological assumptions influence the resulting cross-diffusion and its pattern-forming properties. Finally, we provide a systematic analysis of the sign structure of the reaction Jacobian and of the associated cross-diffusion terms (derived from fast-reaction limits), highlighting their interplay in determining the onset of Turing instability. 

To conclude, our analysis provides an overview of the structure of the cross-diffusion system by linking the reaction and cross-diffusion terms through the properties of the fast-reaction terms, thereby shedding light, on the one hand, on Turing instability and, on the other hand, on the entropy functional associated with the rigorous fast-reaction limit. Therefore, our results help clarify which biologically consistent cross-diffusion mechanisms can generate spatial self-organisation and which structural features, instead, prevent it.


\section{Problem setting}
In this section, we introduce the SKT model, which describes the competition between two species, with one species tending to avoid the other. A cross-diffusion term models this, and we briefly recall its derivation in the fast-reaction limit. We then propose an alternative fast-reaction framework that yields a different cross-diffusion model, distinct from the classical SKT structure.

\subsection{The SKT model: competition and avoidance}
We recall here the general form of the SKT model for competing species. Denoting by $u=u(t,x)$ and $v=v(t,x)$ the densities at time $t$ and position $x$ of two different species competing for the same resource on a bounded domain $\Omega\subset\mathbb{R}^\kappa,\, \kappa\in \mathbb{N}$, the system writes
\begin{equation}\label{SKT model +}
\begin{cases}	
\partial_{t}u-\Delta\left(d_{u}u+d_{12}\,\phi(v)u\right)=u(r_{u}-r_{11}u-r_{12}v),&(t,x)\in\mathbb{R}_{+}\times\,\Omega,\\
\partial_{t}v-d_{v}\Delta v=v(r_{v}-r_{21}u-r_{22}v),
&(t,x)\in\mathbb{R}_{+}\times\,\Omega,\\
u(0,x)=u_0(x),\, v(0,x)=v_0(x), & x\in \Omega,\\
\nabla u\cdot \sigma=\nabla v\cdot \sigma=0,
&(t,x)\in\mathbb{R}_{+}\times\,\partial\Omega,
\end{cases}
\end{equation}
where $\sigma$ indicates the unit normal vector to $\partial \Omega$. 
The diffusion term in the equation for $u$ has a standard diffusion term with a diffusion coefficient $d_u>0$, and a cross-diffusion term depending on $v$ with a cross-diffusion coefficient $d_{12}>0$ and the cross-diffusion function $\phi(v)$ being positive and non-decreasing. On the contrary, the movements of species $v$ are not affected by $u$, so that only the standard diffusion term appears in the equation. The reaction part describes the growth and competition of the two species, with maximum growth rates $r_u,\, r_v$, intra-specific competition rates $r_{11}, r_{22}$, and inter-specific competition rates $r_{12}, r_{21}$, respectively, for species $u$ and $v$.

We define the diffusivity function as
\begin{equation}\label{def D_+}
    D_+(u,v)\coloneqq u(d_u+d_{12}\phi(v))>0,
\end{equation}
where $D_+$ remarks the fact that there is an increase in the diffusion coefficient due to the presence of population $v$.

As shown in~\cite{Desvillettes2015, IMN}, system \eqref{SKT model +} can be obtained by taking the limit as $\e\to 0$ of the following fast-reaction system 
\begin{equation}\label{meso system +}
\begin{cases}
\pa_t u_a-d_u\Delta u_a=u_a(r_{u}-r_{11}u-r_{12}v)+\f 1{\e}\left[k(v)u_b-h(v)u_a\right],\\
\pa_t u_b-(d_u+d_{12})\Delta u_b=u_b(r_{u}-r_{11}u-r_{12}v)-\f 1{\e}\left[k(v)u_b-h(v)u_a\right],\\
\pa_t v-d_v\Delta v=v(r_{v}-r_{21}u-r_{22}v).
\end{cases}
\end{equation}
In particular, species $u$ is divided into two states, $u_a$ (quiet, because $v$ is not abundant) and $u_b$ (excited, because $v$ is present). The switch between these two states is regulated by the $v$-dependent transition rates $h$ and $k$. For their biological meaning,~$h$ is assumed non-decreasing, while $k$ is non-increasing, namely for all $v\geq 0$
\begin{equation}\label{hyp_hk}
 0<h(v), \, k(v)\leq 1, \quad h'(v)\geq 0, \quad k'(v)\leq 0.
\end{equation} 
Moreover, the excited state disperses more than the quiet state. This is modelled by assigning a higher diffusion coefficient to $u_b$ than to $u_a$, namely $d_u$ for state $u_a$, and $d_u+d_{12}$ for state $u_b$.

At a formal level, we can see the link between the fast-reaction system~\eqref{meso system +} and the limiting cross-diffusion system. Formally adding the first two equations of system~\eqref{meso system +}, we obtain the equation for the total population $u=u_a+u_b$ and take the limit as $\e\to 0$, thus obtaining the limiting system 
\begin{equation}\label{limit system +}
    \begin{cases}
        \pa_t u-\Delta\left(
        d_u u +d_{12} u_b\right)=u(r_{u}-r_{11}u-r_{12}v),\\
        \pa_t v-d_v\Delta v=v(r_{v}-r_{21}u-r_{22}v),
    \end{cases}
\end{equation}
where
\begin{equation}\label{limit Q=0 + and -}
    \begin{cases}
        u=u_a+u_b,\\
        Q_{\SKT}:=k(v)u_b-h(v)u_a=0,
    \end{cases}
    \quad \Longrightarrow \quad u_b=u_b(u,v)=\f{h(v)}{h(v)+k(v)} u.
\end{equation}
Therefore, the fast-reaction system~\eqref{meso system +} (formally) converges to the cross-diffusion system~\eqref{SKT model +} where
\begin{equation}\label{condition limit +}
    \phi(v)=\f{h(v)}{h(v)+k(v)}. 
\end{equation}
Thanks to the hypothesis on the transition rates~\eqref{hyp_hk}, we also have
\begin{equation}\label{phi'}
    \phi'(v)=\f{h'(v)k(v)-h(v)k'(v)}{(h(v)+k(v))^2}\geq0.
\end{equation}
Thus, it yields 
\begin{equation}\label{grad A+}
    \pa_{1}D_{+}(u,v)=d_u+d_{12}\phi(v)\ge d_u>0,\quad\qquad\pa_{2}D_+(u,v)=d_{12}u\phi'(v)\geq 0.
\end{equation}
\begin{remark}
By choosing
\begin{equation}\label{ex h,k +}
    h(v)= \f v M,\qquad k(v)=1-\f v M,
\end{equation}
where the constant $M$ is strictly related to the maximum principle applied to the equation of $v$,
we obtain the cross-diffusion system \eqref{SKT model +} with the increasing function $\phi(v)= v/M$ proposed by Shigesada, Kawasaki, and Teramoto~\cite{SKT}, which is considered as a benchmark problem for cross-diffusion systems in population dynamics. 
\end{remark}

\begin{remark}
The rigorous derivation of the cross-diffusion system \eqref{limit system +} is obtained by the fast-reaction limit. Entropy and duality methods are used to get the necessary compactness and pass to the limit as $\varepsilon \to 0$ \cite{Desvillettes2015}.
\end{remark}

\subsection{The SKT model: competition and hiding behaviour}
Following the same intuition as in the previous section, we consider a fast-reaction system in which species $u$ is still divided into two states, $u_a$ (quiet) and $u_b$ (excited). Still, the excited state exhibits a different dispersal behaviour. In particular, we assume that excited individuals tend to hide in the presence of population $v$. This is modelled by assigning a smaller diffusion coefficient to $u_b$ than to $u_a$, namely~$d_u$ for state $u_a$, and $d_u-d_{12}$ for state $u_b$, assuming that $d_{12} < d_u$.
Under these assumptions, the fast-reaction system then writes
\begin{equation}\label{meso system -}
\begin{cases}
\pa_t u_a-d_u\Delta u_a=u_a(r_{u}-r_{11}u-r_{12}v)+\f 1{\e}\left[k(v)u_b-h(v)u_a\right],\\
\pa_t u_b-(d_u-d_{12})\Delta u_b=u_b(r_{u}-r_{11}u-r_{12}v)-\f 1{\e}\left[k(v)u_b-h(v)u_a\right],\\
\pa_t v-d_v\Delta v=v(r_{v}-r_{21}u-r_{22}v),
\end{cases}
\end{equation}
where $u=u_a+u_b$, and parameters and transition rates have the same biological meaning as in the previous case. In particular, $h$ and $k$ satisfy \eqref{hyp_hk} ($h(v)$ is nondecreasing, whereas $k(v)$ is nonincreasing).

From the fast-reaction system~\eqref{meso system -}, we (formally) write the equation satisfied by the total population~$u=u_a+u_b$ and take the limit as $\e\to 0$, to get the limiting cross-diffusion system
\begin{equation*}
    \begin{cases}
        \pa_t u-\Delta\left(
        d_u u -d_{12} u_b\right)=u(r_{u}-r_{11}u-r_{12}v),\\
        \pa_t v-d_v\Delta v=v(r_{v}-r_{21}u-r_{22}v),
    \end{cases}
\end{equation*}
where, as in the avoidance case (see~\eqref{limit Q=0 + and -}), state $u_b$ is given by
$$u_b=u_b(u,v)=\f{h(v)}{h(v)+k(v)} u.$$
As in the previous section (see equation~\eqref{condition limit +} and~\eqref{phi'}), by defining 
\begin{equation}\label{identity SKT -}
   \phi(v):=\f{h(v)}{h(v)+k(v)}, \qquad \phi'(v)\geq 0,
\end{equation}
the fast-reaction system \eqref{meso system +} (formally) converges to the cross-diffusion system 
\begin{equation}\label{SKT model -}
\begin{cases}	
\partial_{t}u-\Delta\left(d_{u}u-d_{12}u\,\phi(v)\right)=u(r_{u}-r_{11}u-r_{12}v),\hspace{1cm}&\hspace{-1cm}(t,x)\in\mathbb{R}_{+}\times\,\Omega,\\
\partial_{t}v-d_{v}\Delta v=v(r_{v}-r_{21}u-r_{22}v),
&\hspace{-1cm}(t,x)\in\mathbb{R}_{+}\times\,\Omega,\\
\nabla u\cdot \sigma=\nabla v\cdot \sigma=0\,,
&\hspace{-1cm}(t,x)\in\mathbb{R}_{+}\times\,\partial\Omega,
\end{cases}
\end{equation}
with the diffusivity function 
\begin{equation}\label{def D_-}
    D_-(u,v)\coloneqq u(d_u-d_{12}\phi(v)).
\end{equation}
Recalling \eqref{identity SKT -} and the assumption $d_{12}<d_u$ on the diffusion coefficients, we have the strict positivity of $D_-$. Regarding its derivatives, we have
\begin{equation}\label{grad A-}
\pa_{1}D_-(u,v)=d_u-d_{12}\phi(v)>0,\qquad
\pa_{2}D_-(u,v)=-d_{12}u\phi'(v)\leq 0.
\end{equation} 
Therefore, we consider the general class of models with diffusivity function as in equation~\eqref{def D_-} where the function $\phi(v)$ is positive, bounded by $1$, and non-decreasing.
\begin{remark}
The same cross-diffusion limit arises if population $u_a$ is assumed to be less motile than $u_b$, provided the monotonicity of the transition rates is reversed, i.e., $h(v)$ is non-increasing, and $k(v)$ is non-decreasing. From a biological perspective, this might correspond to a territorial defence mechanism, where $u_a$ denotes a fighting (aggressive) state against $v$, whereas $u_b$ represents a quiet state. Despite the different biological interpretation, the resulting cross-diffusion structure in the fast-reaction limit remains unchanged.
\end{remark}
As for the SKT model, by choosing the transition rates as in \eqref{ex h,k +}, namely
\begin{equation*}
    h(v)= \f v M,\qquad k(v)=1-\f v M, 
\end{equation*}
where the constant $M$ is strictly related to the maximum principle applied to the equation of $v$,
we obtain the cross-diffusion system \eqref{SKT model -} with $\phi(v)=v /M$, corresponding to a decreasing diffusivity function $D_-(u,v)$. 

\begin{remark}
The rigorous derivation of the cross-diffusion system \eqref{SKT model -} is obtained by the fast-reaction limit, similarly to the avoidance case \eqref{SKT model +} by entropy and duality methods. A key tool to rigorously pass to the limit $\e\to 0$ is the analysis of the same entropy functional introduced in \cite{Desvillettes2015}.
\end{remark}
\section{Linear stability analysis}
In this section, we perform the linear stability analysis of the homogeneous steady states of the systems \eqref{SKT model +} and \eqref{SKT model -}. In both cases, we aim to establish conditions that lead to cross-diffusion-induced instability and pattern formation. While the stability properties of homogeneous equilibria with respect to homogeneous perturbations are the same in both cases (see Proposition \ref{ode stability prop}), the conditions for heterogeneous perturbations are different (see Theorem \ref{Turing thm avoidance} and \ref{Turing thm aggregation}).

For the sake of clarity, we define the following quantities, which depend only on the reaction coefficients
\begin{equation}\label{def R,S,T}
    R\coloneqq r_{11}r_{22}-r_{12}r_{21},\hspace{0,8cm}S\coloneqq r_v\,r_{11}-r_u\,r_{21},\hspace{0,8cm}T\coloneqq r_u\,r_{22}-r_v\,r_{12}.
\end{equation}
Moreover, we distinguish two regimes in which a coexistence equilibrium for the reaction parts exists (positive). In the \textit{weak competition regime}, interspecific competition is weaker than the intraspecific one, and we have
\begin{equation}\label{weak regime}
\dfrac{r_{12}}{r_{22}}<\dfrac{r_u}{r_v}<\dfrac{r_{11}}{r_{21}}\qquad\Longrightarrow\qquad R,S,T>0.
\end{equation}
On the other hand, we have the \textit{strong competition regime}, in which interspecific competition is stronger than intraspecific competition. In this regime, the parameters satisfy
\begin{equation}\label{strong regime}
\dfrac{r_{11}}{r_{21}}<\dfrac{r_u}{r_v}<\dfrac{r_{12}}{r_{22}}\qquad\Longrightarrow\qquad R,S,T<0.
\end{equation}
\subsection{Equilibria of the reaction part}\label{subsect Turing ODE}
Since both systems \eqref{SKT model +} and \eqref{SKT model -} share the same reaction part, they both admit the following homogeneous steady states
\begin{equation*}
(u_1,\,v_1)\coloneqq(0,0),\hspace{1cm}(u_2,\,v_2)\coloneqq\Big(\dfrac{r_u}{r_{11}},0\Big),\hspace{1cm}(u_3,\,v_3)\coloneqq\Big(0,\dfrac{r_v}{r_{22}}\Big),
\end{equation*}
where $(u_1,v_1)$ describes the total extinction, the semi-trivial equilibria $(u_2,v_2)$ and $(u_3,v_3)$ represent the extinction of one species. Both systems also admit a steady state 
\begin{equation}\label{homo equi coexistence}
(u^*,\,v^*)\coloneqq\left(\dfrac{T}{R},\dfrac{S}{R}\right),
\end{equation}
being the quantities $R,\, S,\, T$ defined in~\eqref{def R,S,T}.
We point out that the positivity of both components of the coexistence equilibrium~\eqref{homo equi coexistence} is guaranteed in the weak regime \eqref{weak regime} as well as in the strong regime \eqref{strong regime}. 
\begin{prop}\textit{[Stability properties of equilibria of the reaction part]}\label{ode stability prop}\hfill\\
Considering the reaction part of systems \eqref{SKT model +} and \eqref{SKT model -}, we have the following stability properties.
\begin{itemize}
    \item [(i)] The trivial equilibrium $(u_1,v_1)$ is unstable in both the weak competition regime~\eqref{weak regime} and the strong competition regime~\eqref{strong regime}.
    \item [(ii)] The semi-trivial equilibria $(u_i,v_i)_{i=2,3}$ are unstable in the weak competition regime~\eqref{weak regime} and locally asymptotically stable in the strong competition regime~\eqref{strong regime}.
    \item[(iii)] The coexistence equilibrium $(u^*,v^*)$ is locally asymptotically stable in the weak competition regime~\eqref{weak regime} and unstable in the strong competition regime~\eqref{strong regime}.
\end{itemize}
\end{prop}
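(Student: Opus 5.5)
The proof is a direct linearisation of the reaction part
\[
f(u,v)=u(r_u-r_{11}u-r_{12}v),\qquad g(u,v)=v(r_v-r_{21}u-r_{22}v)
\]
at each equilibrium, followed by the trace--determinant criterion for $2\times2$ matrices. The Jacobian is
\[
J(u,v)=\begin{pmatrix} r_u-2r_{11}u-r_{12}v & -r_{12}u\\ -r_{21}v & r_v-r_{21}u-2r_{22}v\end{pmatrix}.
\]
Throughout we use that all reaction coefficients are positive. Consequently the chains of inequalities in \eqref{weak regime} and \eqref{strong regime} translate into the signs of $R,S,T$ as stated, and into the simpler facts $r_vr_{11}\gtrless r_ur_{21}$ and $r_ur_{22}\gtrless r_vr_{12}$.

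\textbf{(i) and (ii).} At $(u_1,v_1)$ the Jacobian is $\mathrm{diag}(r_u,r_v)$. Both eigenvalues are positive, so the trivial equilibrium is unstable in every regime. At $(u_2,v_2)=(r_u/r_{11},0)$ the Jacobian is upper triangular, with eigenvalues $-r_u<0$ and
\[
r_v-r_{21}\frac{r_u}{r_{11}}=\frac{S}{r_{11}}.
\]
Symmetrically, at $(u_3,v_3)$ the Jacobian is lower triangular, with eigenvalues $-r_v<0$ and $T/r_{22}$. In the weak regime $S,T>0$, so each semi-trivial state has a positive eigenvalue and is unstable (a saddle). In the strong regime $S,T<0$, so both eigenvalues are negative and the state is locally asymptotically stable.

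\textbf{(iii).} At $(u^*,v^*)$ we use the equilibrium relations $r_u-r_{11}u^*-r_{12}v^*=0$ and $r_v-r_{21}u^*-r_{22}v^*=0$. These reduce the Jacobian to
\[
J^*=\begin{pmatrix} -r_{11}u^* & -r_{12}u^*\\ -r_{21}v^* & -r_{22}v^*\end{pmatrix},
\qquad
\mathrm{tr}\,J^*=-(r_{11}u^*+r_{22}v^*)<0,
\qquad
\det J^*=u^*v^*R.
\]
Positivity of $u^*,v^*$ holds in both regimes, as noted after \eqref{homo equi coexistence}. The trace is therefore always negative, and stability is decided by the sign of $R$:
\begin{itemize}
\item in the weak regime $R>0$, so $\det J^*>0$ and $\mathrm{tr}\,J^*<0$; both eigenvalues have negative real part and the equilibrium is locally asymptotically stable;
\item in the strong regime $R<0$, so $\det J^*<0$; the eigenvalues are real with opposite signs, and the equilibrium is an unstable saddle.
\end{itemize}

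There is no real obstacle. The only point needing care is the bookkeeping of signs: one must check that the regime inequalities, together with positivity of the coefficients, indeed give the claimed signs of $S$, $T$ and $R$. For $S$ and $T$ this is immediate after cross-multiplying the outer inequalities. For $R$ one multiplies $r_{12}/r_{22}<r_{11}/r_{21}$ (respectively $>$) through by $r_{22}r_{21}>0$. Once these signs are fixed, each case is a one-line eigenvalue or trace--determinant computation.
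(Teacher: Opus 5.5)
Your proof is correct and follows the same route as the paper. The paper also linearises and reads off the eigenvalues of the triangular Jacobians at the trivial and semi-trivial states from the signs of $S$ and $T$. For the coexistence state it uses the reduced $J^*$, with negative trace and $\det J^* = u^*v^*R$. Your explicit check that the regime inequalities fix the signs of $R,S,T$, and your saddle description via $\det J^*<0$, are small clarifications that the paper takes for granted.
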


\noindent
\begin{proof}
By linearisation, the Jacobian matrix writes as
\begin{equation}\label{def Jacobian kinetics}
J(u, v)\coloneqq 
\begin{pmatrix}
r_u- 2r_{11} u-r_{12}v & -r_{12}u\\
-r_{21}v & r_v-r_{21}u-2r_{22}v 
\end{pmatrix},
\end{equation}
implying that the trivial equilibrium $(u_1,v_1)$ is linearly unstable for the associated ODE systems \eqref{SKT model +} and \eqref{SKT model -}, under both regimes \eqref{weak regime} and \eqref{strong regime}. Evaluating the Jacobian matrix~\eqref{def Jacobian kinetics} at the semi-trivial equilibria, we obtain
\begin{equation*}
   J(u_2,v_2)=
\begin{pmatrix}
-r_u & -{r_ur_{12}}/{r_{11}}\\[1,5ex]
0 & S/{r_{11}}
\end{pmatrix},\qquad
J(u_3,v_3)=
\begin{pmatrix}
T/{r_{22}} & 0\\[1,5ex]
-{r_vr_{21}}/{r_{22}} & -r_v
\end{pmatrix},
\end{equation*}
thus, item $(ii)$ is proved. 

We conclude by analysing the stability of the coexistence equilibrium $(u^*,v^*)$. The Jacobian matrix \eqref{def Jacobian kinetics} becomes
\begin{equation}\label{def Jacobian coex equi}
J^*\coloneqq J(u^*, v^*)=
\begin{pmatrix}
- r_{11} u^* & -r_{12}u^*\\
-r_{21}v^* & -r_{22}v^* 
\end{pmatrix},
\end{equation}
and gives
\begin{equation}\label{tr and det coex equi}
    \text{tr} J^*<0 \qquad\text{and}\qquad\det J^*=u^*v^*R=\f{ST}R.
\end{equation}
Therefore, for the reaction part of both systems~\eqref{SKT model +} and \eqref{SKT model -}, the steady state $(u^*, v^*)$ is locally asymptotically stable to homogeneous perturbations if and only if the parameters of the reaction part are in the weak competition regime \eqref{weak regime}.
\end{proof}

\subsection{Cross-diffusion-driven instability with avoidance behaviour}
In this section, we analyse the stability properties of the homogeneous equilibria 
of the reaction cross-diffusion system with the avoidance effect~\eqref{SKT model +}, where the diffusivity function takes the general form as in~\eqref{def D_+}. More specifically, we focus on the analysis of the coexistence equilibrium $(u^*, v^*)$ in~\eqref{homo equi coexistence} that is the most meaningful one from a biological viewpoint. Thus, we provide the conditions under which Turing instability can occur. 
\begin{thm}\textit{[Turing instability with avoidance behaviour]}\label{Turing thm avoidance}\hfill\\
We consider system~\eqref{SKT model +} with a diffusivity function in the general form~\eqref{def D_+} and we define the quantities 
\begin{equation}\label{alphabeta}
    \alpha\coloneqq r_{21}u^*\phi'(v^*)-r_{22}\phi(v^*),\qquad \beta\coloneqq d_ur_{22}v^*+d_vr_{11}u^*>0.
\end{equation}
Then, the coexistence equilibrium $(u^*,v^*)$ is linearly unstable to heterogeneous perturbations under the weak competition regime \eqref{weak regime}, only if $\alpha>0$ 
and if the cross-diffusion coefficient satisfies 
    \begin{equation}\label{ineq d12}
    d_{12}>\f{\beta }{\alpha v^*}
    +\f2{(\alpha v^*)^2}\left(d_v\phi(v^*)\det J^*+\sqrt{\Delta^{**}}\right),
    \end{equation}
    with
    \begin{equation}\label{deltad12}
    \Delta^{**}\coloneqq (d_v\phi(v^*)\det J^*)^2+\alpha v^*(\beta \phi(v^*)+d_v\alpha v^*)>0.
    \end{equation}
\end{thm}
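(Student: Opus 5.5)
The plan is the standard Turing argument for the linearisation of \eqref{SKT model +} at $(u^*,v^*)$. Expanding the Neumann perturbations in the eigenfunctions of $-\Delta$, with eigenvalues $0=\lambda_0<\lambda_1\le\lambda_2\le\dots$, each mode $\lambda_k$ evolves according to the matrix $J^*-\lambda_k D^*$. Here $J^*$ is given in \eqref{def Jacobian coex equi}, and $D^*$ is the Jacobian of the diffusion fluxes $(D_+(u,v),d_vv)$ at the equilibrium. By \eqref{grad A+},
\[
D^*=\begin{pmatrix} d_u+d_{12}\phi(v^*) & d_{12}u^*\phi'(v^*)\\ 0 & d_v\end{pmatrix}.
\]
This matrix is upper triangular with positive diagonal. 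By Proposition~\ref{ode stability prop}, in the weak regime we have $\operatorname{tr}J^*<0$ and $\det J^*>0$. Hence $\operatorname{tr}(J^*-\lambda D^*)=\operatorname{tr}J^*-\lambda\operatorname{tr}D^*<0$ for all $\lambda\ge0$. Instability of a mode therefore occurs if and only if $h(\lambda):=\det(J^*-\lambda D^*)<0$ for some admissible $\lambda>0$. Since we only need a necessary condition, it suffices to find when $h(\lambda)<0$ for some real $\lambda>0$; the discreteness of the spectrum only makes the condition stronger.

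Next I will compute $h$ explicitly:
\[
h(\lambda)=d_v\big(d_u+d_{12}\phi(v^*)\big)\lambda^2-B\lambda+\det J^*,
\]
where
\[
B=-\big[d_vJ^*_{11}+(d_u+d_{12}\phi)J^*_{22}-d_{12}u^*\phi'J^*_{21}\big]=d_{12}v^*\alpha-\beta,
\]
with $\alpha,\beta$ as in \eqref{alphabeta}. This uses $J^*_{11}=-r_{11}u^*$, $J^*_{22}=-r_{22}v^*$ and $J^*_{21}=-r_{21}v^*$. Since $h(0)=\det J^*>0$ and the leading coefficient is positive, $h$ takes negative values on $(0,\infty)$ if and only if two conditions hold: (a) $B>0$, so that the minimum lies at positive $\lambda$; and (b) $B^2>4d_v(d_u+d_{12}\phi)\det J^*$. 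Condition (a) already forces $\alpha>0$, because $\beta>0$ and $d_{12}>0$. This gives the first necessary condition.

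The main step is to turn (a) and (b) into the explicit threshold \eqref{ineq d12}. Condition (b) is a quadratic inequality in $d_{12}$ with leading coefficient $(\alpha v^*)^2>0$:
\[
(\alpha v^*)^2d_{12}^2-2\big(\alpha v^*\beta+2d_v\phi\det J^*\big)d_{12}+\beta^2-4d_vd_u\det J^*>0 .
\]
I will compute its roots with the quadratic formula and simplify the reduced discriminant. I expect this to collapse to a positive expression of the form $\Delta^{**}$ in \eqref{deltad12}; positivity holds because every term is positive when $\alpha>0$ and $\det J^*>0$. The inequality then holds outside the interval between the two roots.

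The obstacle is to exclude the lower branch $d_{12}<d_-$, where $d_-$ is the smaller root. I will show that on this branch (a) fails. At $d_{12}=\beta/(\alpha v^*)$, where $B=0$, the quadratic equals $-4d_v\det J^*(d_u+d_{12}\phi)<0$. So the point $\beta/(\alpha v^*)$ lies strictly between the two roots. Consequently $d_-<\beta/(\alpha v^*)$, and every $d_{12}<d_-$ gives $B<0$. The only admissible branch is therefore $d_{12}$ greater than the larger root. Writing that root as $\beta/(\alpha v^*)+\tfrac{2}{(\alpha v^*)^2}\big(d_v\phi\det J^*+\sqrt{\Delta^{**}}\big)$ yields \eqref{ineq d12}. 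In the write-up I will carefully track the normalisation of the discriminant so that it matches \eqref{deltad12} exactly.
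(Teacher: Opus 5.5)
Your plan is correct and follows essentially the paper's proof. It has the same steps: negative trace of every mode matrix, and the determinant as a quadratic in $\lambda_n$ with positive leading and constant terms. The linear coefficient must have the destabilising sign, which gives $\alpha>0$ and $d_{12}>\beta/(\alpha v^*)$. Positivity of the discriminant then becomes a quadratic inequality in $d_{12}$, and the upper root is selected. Your check that this quadratic equals $-4d_v\det J^*\bigl(d_u+d_{12}\phi(v^*)\bigr)<0$ at $d_{12}=\beta/(\alpha v^*)$ is the justification for $d_{12_-}<\tilde d_{12}<d_{12_+}$ that the paper only asserts.

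There are two corrections for the write-up.

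\textbf{Sign of $B$.} The bracket $d_vJ^*_{11}+(d_u+d_{12}\phi)J^*_{22}-d_{12}u^*\phi'J^*_{21}$ already equals $d_{12}v^*\alpha-\beta$. The leading minus in your definition of $B$ is therefore a sign slip; your formula for $h$ is correct.

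\textbf{The discriminant will not match exactly.} Do not promise an exact match with \eqref{deltad12}, because that is impossible. The reduced discriminant of your quadratic in $d_{12}$ is
\[
4\Bigl[\bigl(d_v\phi(v^*)\det J^*\bigr)^2+d_v\det J^*\,\alpha v^*\bigl(\beta\phi(v^*)+d_u\alpha v^*\bigr)\Bigr],
\]
so \eqref{ineq d12} holds with $\Delta^{**}$ equal to the bracket. The displayed \eqref{deltad12} drops the factor $d_v\det J^*$ in its second term and has $d_v$ in place of $d_u$. As printed it is not dimensionally homogeneous. The paper's own proof uses four times the bracket under the same name. State the threshold with the correctly computed quantity rather than forcing agreement with the printed formula.
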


\begin{remark}
We observe that when $\phi(v)=v$ (leading to the triangular SKT model with avoidance behaviour), the necessary condition $\alpha>0$ is satisfied and reduces to the one found in~\cite{Breden2019}, i.e.,
\[
    \f{r_u}{r_v}<\f 12\left(\f{r_{11}}{r_{21}}+\f{r_{12}}{r_{22}}\right),
    \]
as well as the condition~\eqref{ineq d12} in the particular case $d_u=d_v$.
\end{remark}

\begin{proof}
By linearising system \eqref{SKT model +} with diffusivity function~\eqref{def D_+} about a general steady state $(\bar u,\bar v)$ and using the spectral decomposition of $-\Delta$ with Neumann boundary condition, we obtain
\begin{equation}\label{def matrix Mn+}
M_{n,+}(\bar u, \bar v)\coloneqq 
\begin{pmatrix}
J_{11}-\pa_1D_+(\bar u, \bar v)\lambda_n & 
J_{12}-\pa_2D_+(\bar u, \bar v)\lambda_n\\
J_{21} & 
J_{22}-d_v\lambda_n
\end{pmatrix},
\end{equation}
where $(J_{ij})_{i,j=1,2}$ are the elements of the Jacobian matrix~\eqref{def Jacobian kinetics} at the equilibrium $(\bar u,\bar v)$, and $\lambda_n,\, n\in\N$ are the eigenvalues of the Laplace operator in the Neumann case.

%
Evaluating $M_{n,+}$ at the coexistence equilibrium $(u^*, v^*)$, the characteristic matrix reduces to 
\begin{equation*}
M_{n,+}(u^*, v^*)\coloneqq 
\begin{pmatrix}
-r_{11}u^{*}-\pa_1D_+(u^*, v^*)\lambda_n\quad & 
-r_{12}u^{*}-\pa_2D_+(u^*, v^*)\lambda_n\\
-r_{21}v^{*} & -r_{22}v^{*}-d_v\lambda_n
\end{pmatrix}.
\end{equation*}
Since the trace is negative (see \eqref{grad A+}), Turing instability can occur only if 
\begin{equation}\label{det coex equi <0}
    \det M_{n,+}(u^*, v^*)=d_v\pa_1D_+(u^*, v^*)\lambda_n^2+B\lambda_n+\det J<0,
\end{equation}
where
\begin{equation}\label{def B coex equi}
    B=\pa_1D_+(u^*, v^*)r_{22}v^*+d_vr_{11}u^*-\pa_2D_+(u^*, v^*)r_{21} v^*.
\end{equation}
Recalling the positivity of $\pa_1D_+$ from \eqref{def D_+} and of $\det J^*$ (see \eqref{weak regime} and \eqref{tr and det coex equi}), the condition \eqref{det coex equi <0} is satisfied only if $B<0$. Therefore, Turing instability can occur only if the conditions below are fulfilled
\begin{equation}\label{system per neces cond}
    \begin{cases}
        B<0,\\
        \Delta^*\coloneqq B^2-4d_v\pa_1D_+(u^*, v^*)\det J^*>0.
    \end{cases}
\end{equation}
We now prove that the condition~\eqref{system per neces cond} is equivalent to 
\begin{equation}\label{system conditions d12}
\begin{cases}
    d_{12}>\tilde{d}_{12}=\f{\beta }{\alpha v^*},\\
    d_{12}<d_{12_-}\quad \vee\quad d_{12}>d_{12_+},
\end{cases}
\end{equation}
where $d_{12_-}$ and $d_{12_+}$ correspond to the real roots of the polynomial in the second condition of \eqref{system conditions d12} and are defined as
\begin{equation}\label{def roots d12}
d_{12_{\pm}}\coloneqq \f{\beta }{\alpha v^*}+\f2{(\alpha v^*)^2}\left(d_v\phi(v^*)\det J^*\pm\sqrt{\Delta^{**}}\right),
\end{equation}
where $\Delta^{**}$ is defined in~\eqref{deltad12}.
Using the definitions in \eqref{grad A+}, \eqref{def B coex equi}, the first condition in \eqref{system per neces cond} rewrites as
\begin{equation}\label{equi cond B<0}
B=d_ur_{22}v^*+d_vr_{11}u^*+d_{12}v^*\left(r_{22}\phi(v^*)-r_{21}u^*\phi'(v^*)\right)=\beta-d_{12}v^*\alpha<0.
\end{equation}
Therefore, the inequality above holds only if $\alpha>0$, thus giving the first condition in \eqref{system conditions d12}. 

Using~\eqref{grad A+},~\eqref{alphabeta} and~\eqref{equi cond B<0}, the second condition in \eqref{system per neces cond} rewrites as
\begin{align}\label{para d12>0}
\Delta^*&=(\beta-d_{12}v^*\alpha)^2-4d_v(d_{u}+d_{12}\phi(v^*))\det J^*\notag\\
&=(\alpha v^*)^2d^2_{12}-2\left(\alpha\beta v^*+2d_v\phi(v^*)\det J^*\right)d_{12}+\beta^2-4d_ud_v\det J^* >0.
\end{align}
We first observe that the coefficient of the linear term in \eqref{para d12>0} is strictly negative and that the non homogeneous coefficient $\beta^2-4d_ud_v\det J^*$ is strictly positive (remember that $\det J^* = u^*v^*(r_{11}r_{22}-r_{12}r_{21})>0$). Therefore, to determine the existence of real solutions of the parabola in \eqref{para d12>0}, we compute its discriminant 
\begin{align*}
    \Delta^{**}&=(\alpha\beta v^*+2d_v\phi(v^*)\det J^*)^2-(\alpha\beta v^*)^2+4d_ud_v\alpha^2(v^*)^2\det J^*\\
    &=4d_v^2(\phi(v^*))^2(\det J)^2+4d_v\alpha\beta v^*\phi(v^*)\det J^*+4d_ud_v(\alpha v^*)^2\det J^*>0,
\end{align*}
thanks to $\alpha, \det J^*>0$. Thus, there exist two real, distinct, and positive roots $d_{12_-}<d_{12_+}$ (remember that $-2\left(\alpha\beta v^*+2d_v\phi(v^*)\det J^*\right)<0$) so that the second condition in \eqref{system conditions d12} and \eqref{def roots d12} hold.

From \eqref{system conditions d12} we observe that $d_{12_-}<\tilde d_{12}<d_{12_+}$ with $d_{12_\pm}$ as in \eqref{def roots d12}, thus giving the result.
\end{proof}

\subsection{Cross-diffusion-driven instability with hiding behaviour}
In this section, we analyse the stability properties of the homogeneous equilibria
of the reaction cross-diffusion system with hiding behaviour \eqref{SKT model -}. More specifically, we focus
on the analysis of the coexistence equilibrium $(u^*,v^*)$ in~\eqref{homo equi coexistence} that is the most
meaningful one from a biological viewpoint. Thus, we provide the conditions under
which Turing instability can occur.
\begin{thm}\textit{[Linearised analysis with hiding behaviour]}\label{Turing thm aggregation}\hfill\\
We consider system~\eqref{SKT model -} with a diffusivity function in the general form~\eqref{def D_-}. Then, the coexistence equilibrium $(u^*,v^*)$ is locally stable to heterogeneous perturbations under the weak competition regime \eqref{weak regime}. 
\end{thm}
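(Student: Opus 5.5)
We follow the same linearisation as in the proof of Theorem~\ref{Turing thm avoidance}, now with diffusivity $D_-$ from~\eqref{def D_-}. Linearising~\eqref{SKT model -} about $(u^*,v^*)$ and projecting onto the Neumann eigenfunctions of $-\Delta$ gives, for each $n\in\N$, the matrix
\begin{equation*}
M_{n,-}(u^*,v^*)=
\begin{pmatrix}
-r_{11}u^*-\pa_1D_-(u^*,v^*)\lambda_n & -r_{12}u^*-\pa_2D_-(u^*,v^*)\lambda_n\\
-r_{21}v^* & -r_{22}v^*-d_v\lambda_n
\end{pmatrix}.
\end{equation*}
Local stability to heterogeneous perturbations holds if, for every $\lambda_n\ge 0$, $\operatorname{tr}M_{n,-}<0$ and $\det M_{n,-}>0$. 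The case $\lambda_0=0$ is Proposition~\ref{ode stability prop}(iii).

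\textbf{Trace.} By~\eqref{grad A-}, $\pa_1D_-(u^*,v^*)=d_u-d_{12}\phi(v^*)>0$, which uses $d_{12}<d_u$ and $0<\phi\le 1$. Hence
\[
\operatorname{tr}M_{n,-}=\operatorname{tr}J^*-\bigl(\pa_1D_-+d_v\bigr)\lambda_n<0 .
\]

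\textbf{Determinant.} Expanding as in~\eqref{det coex equi <0} gives
\[
\det M_{n,-}=d_v\,\pa_1D_-\,\lambda_n^2+B_-\lambda_n+\det J^*,
\]
with
\[
B_-=\pa_1D_-\,r_{22}v^*+d_vr_{11}u^*-\pa_2D_-\,r_{21}v^* .
\]
The key sign observation is that, by~\eqref{grad A-}, $-\pa_2D_-=d_{12}u^*\phi'(v^*)\ge0$. So the cross term, which in the avoidance case produced the negative contribution $-d_{12}v^*\alpha$, now enters with a nonnegative sign. Explicitly,
\[
B_-=(d_u-d_{12}\phi(v^*))r_{22}v^*+d_vr_{11}u^*+d_{12}r_{21}u^*v^*\phi'(v^*)>0 .
\]
Every term is nonnegative and the middle one is strictly positive.

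\textbf{Conclusion.} In the weak regime~\eqref{weak regime}, \eqref{tr and det coex equi} gives $\det J^*=u^*v^*R>0$. Then $\det M_{n,-}$ is a polynomial in $\lambda_n\ge0$ whose three coefficients are all positive, so it is positive for every $n$. Both eigenvalues of every $M_{n,-}$ therefore have negative real part, and $(u^*,v^*)$ is linearly stable.

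\textbf{Where the care goes.} There is no real obstacle here; the main point is the sign bookkeeping of $\pa_1D_-$ and $\pa_2D_-$. Positivity of $\pa_1D_-$ needs $\phi(v^*)\le1$ together with $d_{12}<d_u$. The sign of $\pa_2D_-$ relies on $\phi'\ge0$ from~\eqref{identity SKT -}, so this is the step to check against the hypotheses~\eqref{hyp_hk}. It is worth remarking that the result holds for every admissible $d_{12}$, in contrast with the threshold~\eqref{ineq d12} of Theorem~\ref{Turing thm avoidance}.
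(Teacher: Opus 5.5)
Your proof is correct and follows essentially the paper's argument. In both, $M_{n,-}$ has negative trace, and its determinant is a quadratic in $\lambda_n$ whose middle coefficient $B$ cannot be negative, because $\pa_1D_->0$ and $\pa_2D_-\le 0$. Your version is slightly more careful: you only use $\phi'\ge 0$ and get strict positivity of $B$ from the term $d_v r_{11}u^*$, whereas the paper asserts $\pa_2D_-<0$, which does not follow from \eqref{grad A-}.
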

Therefore, in the weak competition regime, cross-diffusion does not destabilise the homogeneous equilibrium, and therefore, Turing patterns do not occur, unlike the case of avoidance behaviour.

\begin{proof}
Similarly to~\eqref{def matrix Mn+}, we linearise system \eqref{SKT model -} with diffusivity function \eqref{def D_-} about a general steady state $(\bar u,\bar v)$ and we get 
\begin{equation*}
M_{n,-}(\bar u, \bar v)\coloneqq 
\begin{pmatrix}
J_{11}-\pa_1D_-(\bar u,\bar v)\lambda_n & J_{12}-\pa_2D_-(\bar u,\bar v)\lambda_n\\
J_{21} & J_{22}-d_v\lambda_n
\end{pmatrix},
\end{equation*}
where $(J_{ij})_{i,j=1,2}$ are the elements of the Jacobian matrix evaluated at the equilibrium $(\bar u,\bar v)$, and $\lambda_n,\, n\in\N$ are the eigenvalues of the Laplace operator in the Neumann case.

Evaluating $M_{n,-}(u^{*},v^{*})$ at the coexistence equilibrium $(u^*, v^*)$, we observe that it has a negative trace (remember~\eqref{def Jacobian coex equi}).
Therefore, Turing instability can occur only if 
\begin{equation*}
    \det M_{n,-}(u^*, v^*)=d_v\pa_1D_-(u^{*},v^{*})\lambda_n^2+B\lambda_n+\det J^*<0,
\end{equation*}
with 
\begin{equation*}
    B=\pa_1D_-(u^{*},v^{*})r_{22}v^*+d_vr_{11}u^*-\pa_2D_-(u^{*},v^{*})r_{21} v^*.
\end{equation*}
Recalling the positivity of $\pa_1D_-$ and $\det J^*$ in~\eqref{grad A-} and~ \eqref{weak regime},~\eqref{tr and det coex equi}, respectively, then condition \eqref{det coex equi <0 aggregation} is satisfied only if $B<0$. However, since from~\eqref{grad A-} we have $\pa_1D_->0$ and $\pa_2D_-<0$, it cannot be $B<0$, independently of the explicit form of the function~$\phi$.
\end{proof}



\section{Sign-structure classification}
In this section, we classify triangular cross-diffusion systems according to the sign structure of the Jacobian matrix of the reaction part evaluated at a coexistence equilibrium, with particular emphasis on the possibility of cross-diffusion-driven instability.

We consider a general reaction--diffusion system on a bounded domain $\Omega$ describing the dynamics of two species $u$ and $v$, in which the first species exhibits cross-diffusion effects, while the second one undergoes standard diffusion. A general form is
\begin{equation}\label{generalXDmodel}
\begin{cases}	
\partial_{t}u-\Delta\left(D(u,v)\right)=f(u,v),&(t,x)\in\mathbb{R}_{+}\times\,\Omega,\\
\partial_{t}v-d_{v}\Delta v=g(u,v),
&(t,x)\in\mathbb{R}_{+}\times\,\Omega,\\
u(0,x)=u_0(x),\, v(0,x)=v_0(x), & x\in \Omega,\\
\nabla u\cdot \sigma=\nabla v\cdot \sigma=0,
&(t,x)\in\mathbb{R}_{+}\times\,\partial\Omega,
\end{cases}
\end{equation}
where the diffusivity function $D(u,v)$ satisfies 
\begin{equation}
    D(u,v)>0,\quad\textnormal{and}\quad \pa_1D(u,v)>0, \quad \forall\,u,\,v>0.
\end{equation}
The reaction part is general and indicated by the functions $f$ and $g$. We also consider initial conditions $u_0$ and $v_0$, and homogeneous Neumann boundary conditions.

Let $(u^*,v^*)$ be a coexistence equilibrium of the reaction part, and denote by
\[
J^* =
\begin{pmatrix}
f_u & f_v \\
g_u & g_v
\end{pmatrix}_{(u^*,v^*)}
\]
the Jacobian matrix of the reaction terms evaluated at $(u^*,v^*)$.

We are concerned with the possible emergence of Turing patterns and the role of the cross-diffusion term. The goal is to relate the sign structure of the Jacobian matrix with the diffusivity function (in particular, the sign of $\pa_2D$) and its fast-reaction derivation.
Since each entry of $J^*$ may be either positive or negative, there are $2^4 = 16$ possible sign configurations. Among these, eight necessarily correspond to an unstable equilibrium of the reaction system and are therefore excluded. The remaining eight configurations are those that may satisfy the stability conditions
\[
\operatorname{tr} J^* = f_u + g_v < 0,
\qquad
\det J^* = f_u g_v - f_v g_u > 0.
\]

Among these configurations, four correspond to the classical activator--inhibitor structure (i.e., $f_v g_u < 0$), which is known to support diffusion-driven instability even in the absence of cross-diffusion. In particular, they are
\[
\begin{pmatrix}
+ & +\\
- & -
\end{pmatrix},\quad
\begin{pmatrix}
+ & -\\
+ & -
\end{pmatrix},\quad
\begin{pmatrix}
- & +\\
- & +
\end{pmatrix},\quad
\begin{pmatrix}
- & -\\
+ & +
\end{pmatrix}.
\]
The remaining four 
\[
\begin{pmatrix}
- & -\\
- & -
\end{pmatrix},\quad
\begin{pmatrix}
- & +\\
- & -
\end{pmatrix},\quad
\begin{pmatrix}
- & -\\
+ & -
\end{pmatrix},\quad
\begin{pmatrix}
- & +\\
+ & -
\end{pmatrix}.
\]
do not exhibit an activator--inhibitor structure, but may still satisfy the above stability conditions. These latter cases are particularly relevant in the present context, since cross-diffusion may induce instability of an equilibrium that would otherwise remain stable under standard diffusion.

The general form of the characteristic matrix is 
\begin{equation*}
M_{n}(u^*, v^*)\coloneqq 
\begin{pmatrix}
J_{11}-\pa_1D\lambda_n  & J_{12}-\pa_2D\lambda_n\\
J_{21} & J_{22}-d_v\lambda_n
\end{pmatrix},
\end{equation*}
where $(J_{ij})_{i,j=1,2}$ are the elements of the Jacobian matrix evaluated at $(u^*,v^*)$, and $\lambda_n,\, n\in\N$ are the eigenvalues of the Laplace operator in the Neumann case.
In all cases, the characteristic matrix has a negative trace, and thus, Turing instability can occur only if 
\begin{equation}\label{det coex equi <0 aggregation}
    \det M_{n}(u^*, v^*)=d_v\pa_1D(u^*, v^*)\lambda_n^2+B\lambda_n+\det J^*<0,
\end{equation}
with 
\begin{equation}\label{def B coex equi aggregation}
    B=-\pa_1D(u^*, v^*)J_{22}-d_vJ_{11}+\pa_2D(u^*, v^*)J_{21}.
\end{equation}
Thus, it is necessary to have $B<0$.

In the following, we systematically analyse these sign structures and identify the conditions on the diffusivity function $D$ under which cross-diffusion-driven instability may occur. Moreover, we link this property to the associated fast-reaction system having the cross-diffusion system~\eqref{generalXDmodel} as its fast-reaction limit, here written in the general form
\begin{equation}\label{mesogeneral}
\begin{cases}
\pa_t u_a-d_a\Delta u_a=f_1(u_a,u_b,v)+\f 1{\e}\left[k(v)u_b-h(v)u_a\right],\\[0.2cm]
\pa_t u_b-d_b\Delta u_b=f_2(u_a,u_b,v)-\f 1{\e}\left[k(v)u_b-h(v)u_a\right],\\
\pa_t v-d_v\Delta v=\tilde g(u_a,u_b, v),
\end{cases}
\end{equation}
where the reaction part is expressed in terms of the functions $f_1,\, f_2, \, \tilde g$. The transition rate $h(v)$ is nondecreasing, whereas $k(v)$ is nonincreasing.

\subsection{Non-activator-inhibitor structure}
We focus here on the four sign structures that do not correspond to the activator–inhibitor mechanism. In these cases, standard diffusion alone cannot destabilise the homogeneous equilibrium, whereas cross-diffusion may induce the emergence of spatial patterns.

Since we consider triangular cross-diffusion acting only on the first species, the sign of $J_{12}=f_v$ does not influence the diffusion-driven instability mechanism. Consequently, the four admissible non-activator--inhibitor structures can be grouped in pairs according to the sign of $J_{21}=g_u$ and of the diagonal entries.

In the first case, we have
\[
J^* =
\begin{pmatrix}
- & \pm \\
- & -
\end{pmatrix}.
\]
Under the condition on the coefficient $B$ in equation~\eqref{def B coex equi aggregation}, we deduce that the diffusivity function $D$ must be increasing with respect to $v$ for cross-diffusion-driven instability to occur. Regarding the associated fast-reaction system~\eqref{mesogeneral}, it means that species $u_a$ must have a higher diffusion coefficient than species $u_b$.

\begin{remark}
This is the case of the competitive reaction part, in which the only cross-diffusion able to destabilise the homogeneous equilibrium is that of the SKT model (namely, $D(u,v)=(d_u+d_{12}\phi(v))u$ with $\phi$ increasing).
\end{remark}

In the second case, we have
\[
J^* =
\begin{pmatrix}
- & \pm \\
+ & -
\end{pmatrix}.
\]
Under the condition on the cross-diffusion term $B$ in equation~\eqref{def B coex equi aggregation}, we deduce that the diffusivity function $D$ must be decreasing with respect to $v$ for cross-diffusion-driven instability to occur. Regarding the associated fast-reaction system~\eqref{mesogeneral}, it means that species $u_a$ must have a smaller diffusion coefficient than species $u_b$.

\begin{remark}
This is the case of the biomass-autotoxicity model~\cite{GianninoIuorioSoresina} and of the metal surface corrosion model~\cite{Corrosione}, in which the only cross-diffusion able to destabilise the homogeneous equilibrium describes a propagation reduction of roots due to autotoxicity, namely $D(u,v)=(d_u-d_{12}\phi(v))u$ with~$\phi$ an increasing function.
\end{remark}

\subsection{Activator-inhibitor structure}
We focus here on the four sign structures that correspond to the activator–inhibitor mechanism. In these cases, standard diffusion alone can destabilise the homogeneous equilibrium, and therefore cross-diffusion may either enhance or prevent the emergence of spatial patterns. As in the previous subsection, since we consider triangular cross-diffusion acting only on the first species, the four admissible activator--inhibitor structures can be grouped in pairs.

In the first case, we have
\[
J^* =
\begin{pmatrix}
- & - \\
+ & +
\end{pmatrix},\qquad \textnormal{or} \qquad 
J^* =
\begin{pmatrix}
+ & - \\
+ & -
\end{pmatrix}.
\]
Under the condition on the cross-diffusion term $B$ in equation~\eqref{def B coex equi aggregation}, we deduce that a diffusivity function $D$ increasing with respect to $v$ (corresponding to $d_b>d_a$ in the associated fast-reaction system~\eqref{mesogeneral}) reduces the instability region, while a diffusivity function $D$ decreasing with respect to $v$ (corresponding to $d_b<d_a$ in the associated fast-reaction system~\eqref{mesogeneral}) enhances the possibility of instability.

In the second case, we have
\[
J^* =
\begin{pmatrix}
+ & + \\
- & -
\end{pmatrix},\qquad \textnormal{or} \qquad 
J^* =
\begin{pmatrix}
- & + \\
- & +
\end{pmatrix}.
\]
Under the condition on the cross-diffusion term $B$ in equation~\eqref{def B coex equi aggregation}, we deduce that a diffusivity function $D$ decreasing with respect to $v$ (corresponding to $d_b<d_a$ in the associated fast-reaction system~\eqref{mesogeneral}) reduces the instability region, while a diffusivity function $D$ increasing with respect to $v$ (corresponding to $d_b>d_a$ in the associated fast-reaction system~\eqref{mesogeneral}) enhances the possibility of instability.

\begin{remark}
This is the case of the predator--prey model in~\cite{DesvillettesSoresina}.
\end{remark}


\section{Dietary diversity and starvation}
In this section, we consider a class of triangular cross-diffusion systems that describe the competition between two species in the context of dietary diversity and starvation (DDS). We briefly recall its derivation via a fast-reaction limit. We then discuss the Turing instability in relation to the analytical structure of the limit system. 

We consider the following system,
\begin{equation}\label{DDS system}
\begin{cases}	
\partial_{t}u-\Delta(d_{a}u_a+d_{b}u_b)=u(r_{u}-r_{11}u-r_{12}v),\hspace{1cm}&(t,x)\in\mathbb{R}_{+}\times\,\Omega,\\
\partial_{t}v-d_{v}\Delta v=v(r_{v}-r_{21}u-r_{22}v),
&(t,x)\in\mathbb{R}_{+}\times\,\Omega,\\
\end{cases}
\end{equation}
with homogeneous zero flux boundary conditions. The diffusion coefficients are $d_a,\, d_b$, and the reaction terms are of Lotka-Volterra competitive interactions type, as considered in both SKT systems~\eqref{SKT model +},~\eqref{SKT model -}, where the functions $u_a=u_a(u,v)$ and $u_b=u_b(u,v)$ represent two sub-populations of the density $u$. The pair $(u_a,u_b)$ is the (unique) solution of 
\begin{equation}\label{NL system DDS}
\begin{cases}
    u=u_a+u_b,\\
    Q_{\DDS}(u_a,u_b,v)=k(bu_b+dv)u_b-h(au_a+cv)u_a=0,
\end{cases}
\end{equation}
with parameters $a,\,b,\,d>0$ and $c\ge 0$, and where the transition functions $h$ and~$k$ in~\eqref{NL system DDS} belong to~$C^1([0,+\infty))$ and are both increasing. Specifically, there exist $C_h, \,C_k, \,C_{h'}, \,C_{k'}>0$ such that for all $w\ge0$ (see \cite{BCDK})
\begin{equation}\label{hp h,k in DDS}
h(w)\ge C_h>0, \quad k(w)\ge C_k>0, \quad h'(w)\le C_{h'},\quad k'(w)\le C_{k'}.
\end{equation}
We denote $Q_\DDS= Q_{\DDS}(u_a,u_b,v)$ as in \eqref{NL system DDS} and we define
\begin{equation}\label{def A DDS}
    D_\DDS(u,v)\coloneqq d_a u_a(u,v)+d_b u_b(u,v),
\end{equation}
with $(u_a,u_b)$ satisfying \eqref{NL system DDS}, \eqref{hp h,k in DDS}. 

The system  \eqref{DDS system}, \eqref{NL system DDS} can be rigorously obtained by the fast-reaction limit as $\e\to 0$ of the following fast-reaction system (see \cite{BCDK, Brocchieri2025})
\begin{equation}\label{fast system DDS}
\begin{cases}
\pa_t u_a-d_a\Delta u_a=u_a(r_{u}-r_{11}u-r_{12}v)+\f 1{\e}\left[k(bu_b+dv)u_b-h(au_a+cv)u_a\right],\\[1ex]
\pa_t u_b-d_b\Delta u_b=u_b(r_{u}-r_{11}u-r_{12}v)-\f 1{\e}\left[k(bu_b+dv)u_b-h(au_a+cv)u_a\right],\\[1ex]
\pa_t v-d_v\Delta v=v(r_{v}-r_{21}u-r_{22}v),
\end{cases}
\end{equation}
with the diffusion coefficients $d_a,\,d_b,\,d_v>0$. The system \eqref{fast system DDS} models the effect of diet diversity and starvation of species $u,\,v$ in a competitive interaction context and drives the emergence of cross-diffusion terms in \eqref{DDS system} and \eqref{NL system DDS} at the limit $\e\to 0$. In fact, the fast-reaction term $Q_\DDS$ describes the switching between the sub-species $u_a$ and $u_b$ depending on food availability. More specifically, individuals of sub-species $u_a$ (resp.~$u_b$) can freely change the type of food with a transition rate $h$ (resp.~$k$), depending on the amount $au_a+cv$ (resp.~$bu_b+dv$) that competes for the same resource.

Regarding the analysis for the Turing instability, we refer to Proposition~\ref{ode stability prop} for the local stability analysis of the associated ODE system~\eqref{DDS system} and~\eqref{NL system DDS}. To linearise the diffusion part, we first differentiate a general identity 
\[
Q\left(u_a(u,v), u_b(u,v), v\right)=0,
\] 
with respect to $u$ and $v$ and we get, using $u=u_a+u_b,$  (see \cite[Section 3.8.2]{BrocchieriPhD})
\[
(1-\pa_1u_b)\,\pa_1Q+\pa_1u_b\,\pa_2Q=0,\qquad 
-\pa_2u_b\,\pa_1Q+\pa_2u_b\,\pa_2Q+\pa_3Q=0,
\]
that implies
\begin{equation}\label{def grad ub}
\pa_1u_b=-\f{\pa_1Q}{\pa_2Q-\pa_1Q}=1-\pa_1 u_a,    \qquad \pa_2u_b=-\f{\pa_3Q}{\pa_2Q-\pa_1Q}=-\pa_2 u_a.
\end{equation}
Specifically, in the DDS case, $Q_\DDS$ defined in \eqref{NL system DDS} and~\eqref{hp h,k in DDS} satisfies the following properties
\begin{equation}\label{def grad QDDS}
\begin{split}
\pa_1 Q_\DDS&= -(h(au_a+cv)+au_ah'(au_a+cv))<0,\\
\pa_2 Q_\DDS&=k(bu_b+dv)+bu_bk'(bu_b+dv)>0,\\
\pa_3 Q_\DDS&=du_bk'(bu_b+dv)-cu_ah'(au_a+cv),
\end{split}
\end{equation}
giving $\pa_2 Q_\DDS-\pa_1 Q_\DDS>0$. Therefore, the quantities in \eqref{def grad ub} are such that
\begin{equation}\label{pa1 ua, ub properties}
    \pa_1 u_a\in (0,1)\qquad\text{and}\qquad \pa_1 u_b\in (0,1),
\end{equation}
and using \eqref{def grad QDDS} 
\begin{equation}\label{pa2 ua, ub properties}
-\f ca\le \pa_2 u_a\le \f db\qquad\text{and}\qquad-\f db\le \pa_2 u_b\le \f ca.
\end{equation}
We now consider the diffusivity function $D_\DDS$ in \eqref{def A DDS}, and we compute
\begin{equation}\label{grad A DDS}
\begin{split}
    \pa_1D_\DDS(u,v)&=d_a + (d_b-d_a)\pa_1 u_b(u,v),\\
    \pa_2 D_\DDS (u,v)&= (d_b-d_a) \pa_2 u_b(u,v),
\end{split}
\end{equation}
that satisfy by \eqref{pa1 ua, ub properties} and \eqref{pa2 ua, ub properties}
\begin{equation}\label{bounds grad D DDS}
\begin{split}
    0<&\min\{d_a,d_b\}\le \pa_1 D_\DDS(u,v)\le \max\{d_a,d_b\},\\
    &|\pa_2 D_{\DDS}(u,v)|\le (d_a+d_b)\left(\f ca+\f db\right).
\end{split}
\end{equation}
By linearising the system \eqref{DDS system} and~\eqref{NL system DDS} around the coexistence equilibrium $(u^*,v^*)$ in~\eqref{homo equi coexistence}, we observe that \eqref{def matrix Mn+}--\eqref{def B coex equi} hold by replacing the diffusivity function with $D_\DDS$. From \eqref{def B coex equi} and due to the strict positivity of $\pa_1D_\DDS$ in \eqref{bounds grad D DDS}, a necessary condition for Turing instability is $\pa_2 D_\DDS>0$, i.e., by \eqref{def grad ub}, \eqref{def grad QDDS}, and~\eqref{def A DDS}
\begin{equation}\label{necessary cond Turing DDS}
    (d_b-d_a)\pa_3 Q_\DDS<0.
\end{equation}

By choosing the affine transition functions
\[
    h(w)=A+w,\qquad k(w)=B+w,\qquad w\ge0,
\]
with $A,B>0$ the condition \eqref{necessary cond Turing DDS} becomes
\begin{equation}\label{DDS_nec_cond_Turing specific}
(d_b-d_a)\left(du_b(u^*,v^*)-cu_a(u^*,v^*)\right)<0.
\end{equation}
If we consider $c=0$ as in \cite{BCDK}, then a necessary condition for Turing instability is $d_b<d_a$, while the case $c>0$ as in~\cite{Brocchieri2025} drives the analysis to different scenarios, since the sign of $\pa_3Q_{\DDS}$ is not immediately determined. 

\begin{remark}\label{rmk h,k power law DDS}
We observe that in \cite{Brocchieri2025} the authors derived the cross-diffusion system~\eqref{DDS system} and \eqref{NL system DDS} by the fast-reaction limit, extending the hypothesis of the transition functions in \eqref{hp h,k in DDS} and considering $h,\,k$ as power law functions
\begin{equation}\label{def h,k in DDS}
    h(w)=(A+w)^\alpha,\qquad k(w)=(B+w)^\beta,\qquad w\ge0,
\end{equation}
with $A>0,\,B\ge0$ and suitable $0<\alpha\le\beta$. The assumption in \eqref{def h,k in DDS} still satisfies the properties of the cross-diffusion function $D_\DDS$ in \eqref{grad A DDS}. 
\end{remark}
\section{Conclusions}

In this work, we have investigated the role of cross-diffusion terms arising from fast-reaction limits in the onset of diffusion-driven instability. Our analysis highlights how the structure of the cross-diffusion term, and in particular its dependence on the underlying fast-reaction mechanism, plays a decisive role in determining whether spatial patterns may emerge.

In the framework of SKT-type competition models, we have shown that cross-diffusion terms derived from avoidance behaviour can induce Turing instability under suitable conditions, even in the absence of a classical activator--inhibitor structure. In contrast, alternative fast-reaction formulations that decrease diffusivity (hiding behaviour) prevent the occurrence of diffusion-driven instability, despite having the same reaction dynamics and a cross-diffusion term.

These results emphasise that not all biologically consistent cross-diffusion mechanisms are equivalent from the point of view of pattern formation. The qualitative behaviour of the system depends critically on the sign and structure of the cross-diffusion terms, which in turn reflect the underlying biological assumptions.

Finally, the classification based on the sign structure of the reaction Jacobian provides a unified framework for understanding when cross-diffusion can promote or inhibit instability. This perspective clarifies the interplay between reaction kinetics and diffusion structure, and may serve as a guideline for the modelling and analysis of multi-species systems with density-dependent dispersal. In particular, our results show that the ability of cross-diffusion to trigger pattern formation is not a generic feature, but rather a structural property tied to its fast-reaction origin.

Our analysis does not cover all possible cross-diffusion structures arising from fast-reaction mechanisms. In particular, models such as those based on cockroach aggregation or dietary diversity and starvation lead to more general diffusion terms that fall outside the present structural classification. 

We conclude this section with a comparison between the SKT models~\eqref{SKT model +} and~\eqref{SKT model -} and the DDS model \eqref{DDS system} and \eqref{NL system DDS}, in which we outline some relevant structural differences that we believe are crucial for understanding the fast-reaction mechanisms and pattern formation. We first remark that in the DDS framework, the diffusivity function $D_{\DDS}$ is such that $\pa_2D_{\DDS}$ is bounded both from below and from above (see \eqref{grad A DDS}), while this does not hold in the SKT framework, neither for $\pa_2D_+$ nor for $\pa_2D_-$ (see \eqref{grad A+} and \eqref{grad A-}, respectively). The density $u$ does not a priori belong to $L^{\infty}(\R_+\times\Omega)$, and more generally, the boundedness of $u$ may represent a challenging issue due to the nonlinear structure of the cross-diffusion system. Additionally, it is worth mentioning that in the SKT framework, the sign of $\pa_2D_+$ and $\pa_2D_-$ is directly determined by the monotonicity of the cross-diffusion function $\phi$, while this is not the case in the DDS framework. This remark is a direct consequence of two fundamental properties: first, the SKT framework requires an assumption on the relative size of the diffusion coefficients $d_a$ and $d_b$, which is not needed in the DDS framework. Secondly, the fast-reaction part in the SKT framework, $Q_{\SKT}$,
is such that $\pa_3Q_{\SKT}<0$, due to the monotonicity of the transition rates. However, the sign of $\pa_3Q_{\DDS}$ is not directly determined (see \eqref{def grad QDDS}), and consequently, the condition for Turing instability \eqref{necessary cond Turing DDS} may give rise to different scenarios.

Finally, we believe that the dependence of the subpopulations $u_a$ and $u_b$ on the respective transition rates $h$ and $k$ in $Q_{\DDS}$ is not constrained to a modelling choice but also yields interesting results in the mathematical analysis, including pattern formation. Extending the analysis to this broader class represents an interesting direction for future work, although it introduces additional technical challenges.


\bigskip

\bigskip

\begin{acknowledgment}
The authors thank Romina Travaglini for the helpful discussion on the topic of the paper. The results presented in this paper were partially presented at WASCOM 2025. EB is a member of the INdAM-GNAMPA Group. C.S.~is a member of the INdAM-GNFM Group and has received funding from the Programma Giovani Ricercatori ``Rita Levi Montalcini'' 2021.
\end{acknowledgment}

\bigskip

\noindent
On behalf of all authors, the corresponding author states that there is no conflict of
interest.


\begin{thebibliography}{99}

\bibitem{BCDK}
E.~Brocchieri, L.~Corrias, H.~Dietert, and Y.-J.~Kim.
{\em Evolution of dietary diversity and a starvation-driven cross-diffusion system as its singular limit}, 
Journal of Mathematical Biology 83(5):58 (2021).

\bibitem{Brocchieri2025}
E.~Brocchieri, and L.~Corrias.
{\em On a class of triangular cross-diffusion systems and its fast reaction approximation}, arXiv preprint  arXiv:2503.07156, 2025.

\bibitem{BrocchieriPhD}
E.~Brocchieri.
{\em Evolutionary dynamics of populations structured by dietary diversity and starvation: cross-diffusion systems}, PhD thesis, 2023.

\bibitem{BoutonDesvillettesDietert2025}
H.~Bouton, L.~Desvillettes, and H.~Dietert.
{\em Global strong solutions for the triangular Shigesada-Kawasaki-Teramoto cross-diffusion system in three dimensions and parabolic regularisation for increasing functions}. 
arXiv preprint arXiv:2503.08186, 2025.

\bibitem{Breden2019}
M.~Breden, C.~Kuehn, and C.~Soresina.
{\em On the influence of cross-diffusion in pattern formation}, 
Journal of Computational Dynamics, 8(2):213--240, 2021.

\bibitem{BrocchieriDesvillettes}
E.~Brocchieri, L.~Desvillettes, and H.~Dietert.
{\em Study of a class of triangular starvation driven cross-diffusion systems}, 
Ricerche di Matematica, 74(3):1373--1399, 2025.

\bibitem{ConfortoDesSoresina2018}
F.~Conforto, L.~Desvillettes, and C.~Soresina.
{\em About reaction-diffusion systems involving the Holling-type II and the Beddington--De Angelis functional responses for predator prey models},
Nonlinear Differential Equations and Applications, 25:24, 2018.

\bibitem{Corrosione}
G.~Consolo, G.~Inferrera, E.~Proverbio, and C.~Soresina.
{\em An extended corrosive-passivating model with cross-diffusion for the initiation of corrosion patterns}. Physica D: Nonlinear Phenomena, 483:134986, 2025.

\bibitem{DesvillettesFiorentinoMautone}
L.~Desvillettes, L.~Fiorentino, and T.~Mautone.
\emph{Fast reaction limit for a Leslie–Gower model including preys, meso-predators and top-predators}. Nonlinear Analysis, 258:113817, 2025.

\bibitem{Desvillettes2015}
L.~Desvillettes, and A.~Trescases.
{\em New results for triangular reaction cross diffusion system},
Journal of Mathematical Analysis and Applications, 430(1):32--59, 2015.

\bibitem{DesvillettesRDM}
L.~Desvillettes.
{\em About the triangular Shigesada–Kawasaki–Teramoto reaction cross diffusion system}. 
Ricerche di Matematica, 73(1):105--114, 2024.

\bibitem{DesvillettesSoresina}
L.~Desvillettes, and C.~Soresina.
{\em Non-triangular cross-diffusion systems with predator–prey reaction terms}. 
Ricerche di Matematica, 68(1):295--314, 2019.

\bibitem{cockroaches}
J.~Eliaš, H.~Izuhara, M.~Mimura, and B.Q.~Tang.
{\em An aggregation model of cockroaches with fast-or-slow motion dichotomy}. Journal of Mathematical Biology, 85(3):28, 2022.

\bibitem{Elias2018}
J.~Eliaš, M.H.~Kabir, and M.~Mimura.
{\em On the well-posedness of a dispersal model for farmers and hunter–gatherers in the Neolithic transition}. Mathematical Models and Methods in Applied Sciences, 28(2):195--222, 2018.

\bibitem{GambinoIGP}
F.~Farivar, G.~Gambino, V.~Giunta, M.C.~Lombardo, and M.~Sammartino. \emph{Intraguild predation communities with anti-predator behavior}. SIAM Journal on Applied Dynamical Systems, 24(2):1110--1149, 2025.

\bibitem{Gambino2012}
G.~Gambino, M.C.~Lombardo, and M.~Sammartino.
{\em Turing instability and traveling fronts for a nonlinear reaction–diffusion system with cross-diffusion}. Mathematics and Computers in Simulation, 82(6):1112--1132, 2012.

\bibitem{Gambino2013}
G.~Gambino, M.C.~Lombardo, and M.~Sammartino.
{\em Pattern formation driven by cross-diffusion in a 2D domain.} 
Nonlinear Analysis: Real World Applications, 14(3):1755--1779, 2013.

\bibitem{GianninoIuorioSoresina}
F.~Giannino, A.~Iuorio, and C.~Soresina.
{\em Beyond water limitation in vegetation-autotoxicity patterning: a cross-diffusion model.} 
arXiv preprint arXiv:2506.03981, 2025.

\bibitem{IIR}
M.~Iida, H.~Izuhara, and R.~Kon. 
\emph{Cross-diffusion predator–prey model derived from the dichotomy between two behavioral predator states}. 
Discrete and Continuous Dynamical Systems-B, 28(12):6159--6178, 2023

\bibitem{IMN}
M.~Iida, M.~Mimura, and H.~Ninomiya,
{\em Diffusion, Cross-diffusion and Competitive Interaction},
Journal of Mathematical Biology 53:617--641, 2006.

\bibitem{Morgan2026}
J.~Morgan, C.~Soresina, B.Q.~Tan, and B.-N.~Tran.
{\em Singular limit and convergence rate via projection method in a model for plant-growth dynamics with autotoxicity}.
Journal of Differential Equations, 452:113797, 2026.

\bibitem{SKT}
N.~Shigesada, K.~Kawasaki, and E.~Teramoto.
{\em Spatial segregation of interacting species}. 
Journal of Theoretical Biology, 79(1):83--99, 1979.

\bibitem{BaoNgocMM}
B.Q.~Tang, and B.-N.~Tran.
{\em Rigorous derivation of Michaelis–Menten kinetics in the presence of slow diffusion}. 
SIAM Journal on Mathematical Analysis, 56(5):5995--6024, 2024.

\bibitem{villar2025designing}
E.~Villar-Sep{\'u}lveda, A.R.~Champneys, and A.L.~Krause.
{\em Designing reaction-cross-diffusion systems with Turing and wave instabilities}, 
Journal of Mathematical Biology, 91(4):37, 2025.

\end{thebibliography}
\end{document}